\documentclass[12pt]{amsart}
\usepackage{amssymb, amsmath, amsthm, amsfonts, amscd}
\usepackage{xcolor}
\usepackage{datetime}
\usepackage{mathabx}
\usepackage{verbatim}
\usepackage{enumerate}
\newtheorem{theorem}{Theorem}[section]
\newtheorem{lemma}[theorem]{Lemma}
\newtheorem{proposition}[theorem]{Proposition}
\newtheorem{corollary}[theorem]{Corollary}
\theoremstyle{definition}
\newtheorem{definition}[theorem]{Definition}
\newtheorem{example}[theorem]{Example}

\theoremstyle{remark}
\newtheorem{remark}[theorem]{Remark}
\numberwithin{equation}{section}


\setlength{\textheight}{9in}\setlength{\textwidth}{475pt}
\oddsidemargin -0mm \evensidemargin -0mm \topmargin -0pt

\title[ Normaloid essential isometric operators]{Hyperinvariant subspaces for normaloid essential isometric operators}
\author{Neeru Bala}
\address{Department of Mathematics and Computing, Indian Institute of Technology (ISM) Dhanbad, Dhanbad, Jharkhand, India 826 004}
\email{neerusingh41@gmail.com, neerubala@iitism.ac.in}

\author{Ramesh Golla}
\address{Department of Mathematics, Indian Institute of Technology - Hyderabad, Kandi, Sangareddy, Telangana, India 502 284.}
\email{rameshg@math.iith.ac.in}

\subjclass[2010]{47A10, 47A15, 47A53; 47B07}
\keywords{Absolutely norm (minimum) attaining operator, essentially normal, invariant subspace, hyperinvariant subspace.}
\date{\currenttime ;  \today}
\begin{document}
\begin{abstract}

In this article, we prove the existence of a non-trivial hyperinvariant subspace for a subclass of compact perturbations of scalar multiple of a partial isometry.  Later, we illustrate that
this class contains several important classes of operators. As a consequence, we prove that a Schatten class perturbation of a partial isometry with finite-dimensional null space has a non-trivial hyperinvariant subspace. 
\end{abstract}

\maketitle

	\section{Introduction}
	Throughout the article, we assume $H,H_1,H_2$ to be separable infinite dimensional complex Hilbert spaces and $\mathcal{B}(H_1,H_2)$ denote the space of all bounded linear operators from $H_1$ to $H_2$, in particular, $B(H,H):=B(H)$. A closed subspace $M\subseteq H$ is called an \textit{invariant} subspace for $T\in\mathcal{B}(H)$, if $T(M)\subseteq M$. If $M$ is invariant under all the operators in $\mathcal{B}(H)$ which commutes with $T$, then $M$ is called a \textit{hyperinvariant} subspace for $T$.
	
	One of the famous problems in operator theory is the invariant subspace problem, which can be stated as:\\
	"Does every non-zero bounded linear operator which is not a scalar multiple of the identity operator on a Hilbert space have a non-trivial invariant (or hyperinvariant) subspace?"
	
	
	
	The invariant (or hyperinvariant) subspace problem has been solved for many subclasses of $\mathcal{B}(H)$, for example, compact operators \cite{SMI} and isometries \cite{DOU}. Some of the famous techniques used to find a non-trivial invariant subspace are fixed point theory or Lomonosov's technique \cite{LOM}, Brown's technique of functional calculus \cite{BRO} and non-standard analysis which is used by Bernstein and Robinson \cite{BER}.  For more details about the invariant and hyperinvariant subspaces, we refer to \cite{Chalendar,RADJAVI}.
	
	
	In this article, our main aim is to study the hyperinvaraint subspace problem for a subclass of normaloid essentially isometric operators (or a scalar multiple of normaloid essentially isometric operators). An operator $T\in\mathcal{B}(H)$ is called essentially isometric if $I-T^*T$ is a compact operator. Explicitly saying, we prove that a normaloid essentially isometric operator $T$ on a Hilbert space has a non-trivial hyperinvariant subspace if eigenvalues of $T^*T$ smaller than $1$ form a p-summable series for some $1\leq p<\infty$. This improves Corollaries $6.16$ and $6.17$ of \cite{RADJAVI}.
	
	The class of absolutely norm attaining operators ($\mathcal{AN}$-operators), which was introduced by Carvajal and Neves in \cite{CAR1} lies in the class of scalar multiple of essentially isometric operators. 	Another class which is also a scalar multiple of essentially isometric operator is the class of absolutely minimum attaining operators (or $\mathcal{AM}$-operators), which is defined in a similar way as the class of $\mathcal{AN}$-operators but they differs significantly. For example, every $\mathcal{AM}$-operator has a closed range, while it is not the case with the class of $\mathcal{AN}$-operators. Another notable difference is the spectral behavior of these two classes, see \cite{CAR1,GAN,GAN1} for more details. 	Existence of hyperinvariant subspaces for Toeplitz and Hankel in the class of $\mathcal{AN}$ and $\mathcal{AM}$-operators  are studied in \cite{GRSSS1,GRSSS2}. 
	In this article,  we show the existence of a non-trivial hyperinvariant subspace for normaloid  $\mathcal{AN}$-operators as well as minimaloid $\mathcal{AM}$-operators, which improve the results of \cite{RAMpara}, where the author have studied existence of a reducing subspace for paranormal $\mathcal{AN}$-operators.
	For more details about $\mathcal{AN},\, \mathcal{AM}$-operators, we refer to \cite{PAL,RAMpara,VENKU} and references therein.

	This article is divided into four sections. Section $2$, consists of the basic terminology and results which are used throughout the article. In sections $3$,  we show the existence of a non-trivial hyperinvariant subspace for a subclass of normaloid essentially isometric operators. In section $4$, we prove the existence of non-trivial hyperinvariant subspace for normaloid  $\mathcal{AN}$-operators and minimaloid $\mathcal{AM}$-operators, and extend the same for the operator norm closure of these class of operators.
 Finally, we prove that a Schatten class perturbation of a partial isometry with finite-dimensional null space has a non-trivial hyperinvariant subspace.
\section{Notations and preliminaries}

 The  null space and range space of $T\in \mathcal B(H)$ are denote by $N(T)$ and $R(T)$, respectively. 
In this article, we frequently use properties of the Calkin algebra $\mathcal{B}(H)/\mathcal{K}(H)$, where $\mathcal{K}(H)$ denotes the space of all compact operators in $\mathcal{B}(H)$. Consider the natural homomorphism
$$\pi:\mathcal{B}(H)\rightarrow \mathcal{B}(H)/\mathcal{K}(H).$$
Then the  \textit{essential spectrum} of $T$ is defined by  $\sigma_{\text{ess}}(T)=\sigma(\pi(T))$, where $\sigma(S)$ is the  spectrum of an operator $S\in\mathcal{B}(H)$. 

There is an another way of defining the essential spectrum of an operator, that is using Fredholm theory. An operator $T\in\mathcal{B}(H)$ is called \textit{Fredholm}, if $R(T)$ is closed and $N(T)$, $N(T^*)$ are finite dimensional. In this case, the index of $T$ is defined by
$$\text{ind}(T)=\dim(N(T))-\dim(N(T^*)).$$
Note that,
\begin{align*}
	\sigma_{\text{\text{ess}}}(T)&=\{\lambda\in\mathbb{C}:T-\lambda I\text{ is not Fredholm}\}.
\end{align*}

The famous Weyl's theorem states that if $A,\,B\in\mathcal{B}(H)$ with $A-B\in \mathcal K(H)$, then $\sigma_{\text{ess}}(A)=\sigma_{\text{ess}}(B)$ (see \cite[Corollary 5.6]{Berbarian}).
For a self-adjoint operator $T\in\mathcal{B}(H)$, the set $\sigma_d(T)=\sigma(T)\setminus\sigma_{\text{ess}}(T)$ is called the \textit{discrete spectrum} of $T$.
For more details about Fredholm theory, we refer to \cite{MULL}.

In this article, we also use a special subclass of compact operators, that is the Schatten $p$-class.
\begin{definition}
   The singular values $s_n(A) \; (n\in \mathbb{N})$ of  $A\in \mathcal K(H)$ are the eigenvalues of $|A|=(A^{*}A)^{\frac{1}{2}}$. We say $A$  is in the Schatten $p$-class $\mathcal K_{p}(H),\; (1\leq p<\infty)$, if 
 \begin{equation*} 
  \sum_{n=1}^{\infty}s_n(A)^{p}<\infty.
 \end{equation*}
 \end{definition}

It is easy to observe that the class of finite rank operators, $\mathcal{F}(H)\subseteq\mathcal{K}_p(H)$ for $1\leq p<\infty$.



Throughout the article, we deal with normaloid essentially isometric operators, where by {\it normaloid}, we mean operators $T\in\mathcal{B}(H)$ for which the spectral radius $r(T)=\|T\|$, where  $r(T):=\sup\{|\lambda|:\lambda\in\sigma(T)\}$, equivalently $$r(T)=\underset{n\rightarrow\infty}{\limsup}\|T^n\|^{1/n}.$$

An operator $T\in\mathcal{B}(H)$ is said to be \textit{essentially isometric} if $\pi(T)$ is an isometry. In other words, $T$ is essentially isometric if $I-T^*T$ is a compact operator.


Here we mention a few more notations, which are frequently used in the subsequent sections. If $M$ is a subspace of $H$, then we denote the 
unit sphere of $M$ by $S_M=\{x\in M:\|x\|=1\}$. For any $r\in\mathbb{R}^+\cup\{0\}$, we denote the circle and ball of radius $r$ in $\mathbb{C}$ by $C(0,r):=\{z\in\mathbb{C}:|z|=r\}$ and $B(0,r):=\{z\in\mathbb{C}:|z|\leq r\}$, respectively.

\section{Normaloid essentially isometric opertors}	
In this section, we study a connection between the spectrum  and existence of non-trivial hyperinvariant subspace for an essentially isometric operator.

We start this section with a small observation about the essential spectrum of an operator.
	\begin{lemma}\label{lemma essspe}
		Let $T\in\mathcal{B}(H)$ and $N(T)=N(T^*)$. Then $\sigma_{\text{ess}}(T^*T)=\sigma_{\text{ess}}(TT^*)$.
	\end{lemma}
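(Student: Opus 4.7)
The plan is to split the equality into the nonzero part and the behavior at $0$, and to handle the nonzero part by the standard $AB$–$BA$ argument and the zero part using the hypothesis $N(T)=N(T^*)$.

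First I would recall the general fact that for any $A,B\in\mathcal B(H)$ and any nonzero $\lambda$, the operator $\lambda I-AB$ is Fredholm if and only if $\lambda I-BA$ is Fredholm (and in that case they have the same index). Applying this with $A=T^*$, $B=T$ and separately $A=T$, $B=T^*$ gives
\[
\sigma_{\text{ess}}(T^*T)\setminus\{0\}=\sigma_{\text{ess}}(TT^*)\setminus\{0\}.
\]
So it remains to show $0\in\sigma_{\text{ess}}(T^*T)$ iff $0\in\sigma_{\text{ess}}(TT^*)$.

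Since $T^*T$ and $TT^*$ are self-adjoint, a positive self-adjoint operator $S$ is Fredholm exactly when $R(S)$ is closed and $\dim N(S)<\infty$ (because $N(S)=N(S^*)$ automatically). I would now use two classical facts: (a) $N(T^*T)=N(T)$ and $N(TT^*)=N(T^*)$; and (b) $R(T)$ is closed $\iff R(T^*)$ is closed $\iff R(T^*T)$ is closed $\iff R(TT^*)$ is closed. The first fact is elementary (if $T^*Tx=0$ then $\|Tx\|^2=\langle T^*Tx,x\rangle=0$), and the second is the standard equivalence for closed range obtained via polar decomposition or the closed range theorem.

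Combining these observations, $T^*T$ fails to be Fredholm iff either $R(T)$ is not closed or $\dim N(T)=\infty$, and $TT^*$ fails to be Fredholm iff either $R(T^*)$ is not closed or $\dim N(T^*)=\infty$. Under the hypothesis $N(T)=N(T^*)$, these two conditions coincide, which gives the equivalence at $0$ and completes the proof. The only mildly delicate point is invoking the general $AB$–$BA$ Fredholm equivalence for nonzero $\lambda$; everything else is routine. I expect this to be short enough that the shorter proof promised by the authors will simply make these two ingredients explicit.
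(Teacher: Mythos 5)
Your proposal is correct and follows essentially the same route as the paper: the nonzero part via the standard fact that $\sigma_{\text{ess}}(T^*T)\setminus\{0\}=\sigma_{\text{ess}}(TT^*)\setminus\{0\}$ (the paper cites M\"uller's $AB$--$BA$ theorem for this), and the point $0$ handled through $N(T^*T)=N(T)$, $N(TT^*)=N(T^*)$ and the closed-range equivalences, with the hypothesis $N(T)=N(T^*)$ matching the kernel conditions. Your write-up merely makes explicit the ingredients the paper uses implicitly.
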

	\begin{proof}
		By \cite[Theorem 6, Page 173]{MULL}, we know that $$\sigma_{\text{ess}}(T^*T)\setminus\{0\}=\sigma_{\text{ess}}(TT^*)\setminus\{0\}.$$ It is enough to show that, if $0\in\sigma_{\text{ess}}(T^*T)$ then $0\in\sigma_{\text{ess}}(TT^*)$. If $0\in\sigma_{\text{ess}}(T^*T)$, then by definition either $R(T^*T)$ is not closed or $N(T)$ is infinite dimensional. By the assumption, this is equivalent to say that either $R(TT^*)$ is not closed or $N(T^*)$ is infinite-dimensional, respectively. As a result, $0\in\sigma_{\text{ess}}(TT^*)$.
	\end{proof}
Now, we discuss the main result of this section regarding existence of hyperinvariant subspace.	This is a generalization of \cite[Corollary 6.16]{RADJAVI}.
\begin{theorem}\label{Main thm2}
	Let $T\in\mathcal{B}(H)$ be an operator which is not a scalar multiple of the identity and satisfies the following conditions.
	\begin{enumerate}
		\item  $\sigma_{\text{ess}}(T^*T)=\{\alpha\}$ for a non-negative real number $\alpha$,
		\item \label{condition3} ${\{ (\alpha-\lambda_n): \lambda_n \in \sigma_d(T^*T) }\}\in \ell^p(\mathbb{N})$ for some $1\leq p<\infty.$
	\end{enumerate}  Then $T$ has a non-trivial hyperinvariant subspace.
\end{theorem}
\begin{proof} 

If $\alpha=0$, then $T$ is a compact operator and consequently has a non-trivial hyperinvariant subspace. Thus, we assume that $\alpha\ne 0$.
	We also assume that  $N(T)=\{0\}=N(T^*)$, otherwise $N(T)$ or $\overline{R(T)}$ is a non-trivial hyperinvariant subspace for $T$. By Lemma \ref{lemma essspe}, we have $\sigma_{\text{ess}}(T^*T)=\{\alpha\}=\sigma_{\text{ess}}(TT^*)$. Let $T=U|T|$ be the polar decomposition of $T$, where $U$ is a unitary operator.  
	By \cite[Lemma 3]{Kover1}, we get that $\sigma_{\text{ess}}(|T|)={\{\sqrt{\alpha}}\}$ and 
	\cite[Theorem 1]{Sikonoa} implies that there exists a unitary operator $V$ and $K\in \mathcal K(H)$ such that $|T|=K+V^*(\sqrt{\alpha }I)V=K+\sqrt{\alpha}I$. Then
 \begin{equation}\label{cptpertunitary}
 T=U|T|=U(K+\sqrt{\alpha}I)=K_1+\sqrt{\alpha}U,
 \end{equation}
 where $K_1=UK$, a compact operator. By the Weyl's theorem, we conclude that $\sigma_{\text{ess}}(T)=\sigma(\sqrt{\alpha}U)\subseteq C(0,\sqrt{\alpha})$. The given condition $(2)$ implies that $K=|T|-\sqrt{\alpha}I \in \mathcal K_{p}(H)$ for $1\leq p<\infty$ and as a result $K_1\in \mathcal K_{p}(H)$.

	
	
	We assume that  $\sigma(T)=\sigma_{\text{ess}}(T)$ and $\sigma(T)$ is connected. Because, if $\sigma(T)$ is not connected, then the Riesz-Dunford functional calculus gives existence of a non-trivial hyperinvariant subspace for $T$, and if $\sigma(T)\ne\sigma_{\text{ess}}(T)$, then for $\lambda \in \sigma(T)\setminus \sigma_{\text{ess}}(T)$, we have $T-\lambda I$ is a Fredholm operator, which gives either $N(T-\lambda I)$ or $R(T-\lambda I)$ is an hyperinvariant subspace for $T$. 
  
    If $\sigma(T)$ is a singleton set, say $\{\beta\}$, then  $\sigma_{\text{ess}}(U)={\{\frac{\beta}{\sqrt{\alpha}}}\}=\sigma_{\text{ess}}(\frac{\beta}{\sqrt{\sqrt{\alpha}}}I)$. By \cite[Theorem 1]{Sikonoa}, there exists a unitary operator $W$ and a compact operator $K_2$ such that 
 \begin{equation}\label{unitaryeqn}
 U=K_2+W^*\frac{\beta}{\sqrt{\alpha}} W=K_2+\frac{\beta}{\sqrt{\alpha}}I.
 \end{equation}
 By equations (\ref{cptpertunitary}) and (\ref{unitaryeqn}), we get that
	\begin{equation*}
		T=\sqrt{\alpha} U+K_1=\sqrt{\alpha} (K_2+\frac{\beta}{\sqrt{\alpha}} I)+K_1=\beta I+\tilde{K},
	\end{equation*}
	where $\tilde{K}=\sqrt{\alpha} K_2+K_1\in \mathcal K(H)$. By the Lomonosov's theorem \cite[Corollary 8.25]{RADJAVI}, we know that $\tilde{K}$ has a non-trivial hyperinvariant subspace and so is $T$. 
	
		Next, we assume that $\sigma(T)$ contains more than one point. In this case, the operator $T$ given by Equation (\ref{cptpertunitary}) satisfies the conditions of \cite[Corollary 6.16, Page 107]{RADJAVI}. Hence  $T$ has a non-trivial hyperinvariant subspace.
\end{proof}

By looking at the proof of Theorem \ref{Main thm2}, it is easy to obtain the following.
\begin{corollary}
	Let $T\in\mathcal{B}(H)$ be such that $\sigma(T)=\sigma_{\text{ess}}(T)$ is connected  and $\sigma_{\text{ess}}(T^*T)$ is a singleton set. Then either of the following conditions  gives the existence of a non-trivial hyperinvariant subspace for $T$.
	\begin{enumerate}
		\item $\sigma(T)$ is a singleton set.
		\item ${\{ (\alpha-\lambda_n): \lambda_n \in \sigma_d(T^*T) }\}\in \ell^p(\mathbb{N})$ for some $1\leq p<\infty.$
	\end{enumerate}
\end{corollary}

   

\begin{corollary}
    Let $S\in \mathcal B(H)$ be an isometry and $K\in \mathcal K_{p}(H)$ for some $1\leq p<\infty$. Then $S+K$ has a non-trivial hyperinvariant subspace.
\end{corollary}
\begin{proof}
Define  $T=S+K$. Then we have $T^*T=I+\tilde{K}$, where $\tilde{K}=S^*K+K^*S+K^*K$. It is easy to see that $\tilde{K}\in \mathcal K_{p}(H)$, $\sigma_{\text{ess}}(T^*T)={\{1}\}$, and  $\sigma_{d}(T^*T)={\{1+\lambda_n: \lambda_n \in \sigma_{d}(\tilde{K})}\}.$ 
  Now, the result follows from Theorem \ref{Main thm2}.
\end{proof}
We can also obtain \cite[Corollary 6.17]{RADJAVI} from Theorem \ref{Main thm2}.

\begin{corollary}
	Let $T\in \mathcal B(H)$ be such that $1-T^*T \in \mathcal K_{p}(H)$ for some $1\leq p<\infty$. Then $T$ has non-trivial hyperinvariant subspace.
\end{corollary}
\begin{proof}
	Let $I-T^*T=K\in \mathcal K_{p}(H)$. Then we have $\sigma_{\text{ess}}(T^*T)={\{1}\}$. Since $K\in \mathcal K_p(H)$, let ${\{\mu_n:n\in \mathbb N}\}$ be the spectrum of $K$. Then each $\mu_n$ is an eigenvalue with finite multiplicity and $\sigma(T^*T)={\{1-\mu_n:n\in \mathbb N}\}$. Hence by Theorem \ref{Main thm2}, $T$ has a non-trivial hyperinvariant subspace.
\end{proof}

\begin{theorem}\label{Main thm}
	Let $T\in\mathcal{B}(H)$ satisfies the following conditions:
	\begin{enumerate}
		\item $T$ is normaloid,
		\item  $\sigma_{\text{ess}}(T^*T)=\{\alpha\}$ for a non-negative real number $\alpha$,
		\item\label{condition3} $\{ \alpha -\lambda_i:\lambda_i\in\sigma_d(T^*T),\,\lambda_i<\alpha \}\in\ell^p(\mathbb{N})$ for some $1\leq p<\infty.$ 
	\end{enumerate}  Then $T$ has a hyperinvariant subspace.
\end{theorem}
\begin{proof} By a similar argument as in the proof of Theorem \ref{Main thm2}, we assume that $\alpha>0$ and $N(T)=\{0\}=N(T^*)$.
	
	First we assume that $\sqrt{\alpha}<\|T\|$.  As $T$ is normaloid, there exists  $\lambda\in\sigma(T)$ such that $|\lambda|=\|T\|$.  Thus $T-\lambda I$ is a Fredholm operator and consequently $R(T-\lambda I)$ is closed. But $\lambda\in\sigma(T)$, we have either $N(T-\lambda I)\ne\{0\}$ or $R(T-\lambda I)\ne H$ 
	is a non-trivial hyperinvariant subspace for $T$.
	
	If $\sqrt{\alpha}=\|T\|$, then $T$ satisfies the conditions of Theorem \ref{Main thm2}. Consequently, $T$ has a non-trivial hyperinvariant subspace.
 \end{proof}

	The following result is a special case of the above result when $\alpha=1$.
	\begin{corollary}
		Let $T\in\mathcal{B}(H)$ be a normaloid essentially isometric operator. If the set
		$$\{1-\lambda:\lambda\in\sigma_d(T^*T),\,\lambda<1\} $$ is $p$-summable for some $1\leq p<\infty$, then $T$ has a non-trivial hyperinvariant subspace.
	\end{corollary}
	\begin{proof}
	If $T$ is an essentially isometry, then $\sigma_{\text{ess}}(T^*T)=\{1\}$. The result follows from Theorem \ref{Main thm}.
	\end{proof}
  The following example illustrates the fact that the normaloid condition in Theorem \ref{Main thm} is a sufficient condition but not necessary.
  \begin{example}\label{example AN}
  	Consider the operator $T:\mathbb{C}^2\oplus\ell^2(\mathbb{N})\rightarrow \mathbb{C}^2\oplus\ell^2(\mathbb{N})$, defined by
  	$$T=A\oplus R,$$
  	where $R$ is the right shift operator on $\ell^2(\mathbb{N})$ and 
  	\[A=\begin{bmatrix}
  		0&2\\
  		0&0
  	\end{bmatrix}.
  	\]
  	It is easy to see that $\|T\|=2$ and $\sigma(T)=\{z\in\mathbb{C}:|z|\leq 1\}$. Hence $r(T)=1<\|T\|$ and consequently $T$ is not normaloid. Note that $\sigma_{\text{ess}}(T^*T)=\{1\}$. If \[S=\begin{bmatrix}
  		X&Y\\
  		Z&W
  	\end{bmatrix}\in\mathcal{B}(\mathbb{C}^2\oplus\ell^2(\mathbb{N}))\]
  	is an operator commuting with $T$, then
  	\begin{align*}
  		AX=XA,\,AY=YR,\,RZ=ZA,\text{ and }RW=WR.
  	\end{align*}
  	The condition $RZ=ZA$ implies that $ZA=R^*ZA^2=0$. Consequently, either $ZA=RZ=0$ or $Z=0$. Hence $\mathbb{C}^2\oplus\{0\}$ is a non-trivial hyperinvariant subspace for $T$. This is an example of an essentially isometric non-normaloid operator, which has a non-trivial hyperinvariant subspace. Observe that $\sigma_{d}(T^*T)=\sigma_{d}(A^*A)={\{0,4}\}$ and the point spectrum $\sigma_p(T^*T)={\{0,1,4}\}$.
  \end{example}

  \section{Absolutely norm attaining operators}
 In this section, we give examples of operators which comes under the class of operators discussed in section $3$, namely absolutely norm attaining operators and absolutely minimum attaining operators. 
  
  Recall that  $T\in \mathcal{B}(H_1,H_2)$ is said to be \textit{norm attaining}, if there exist $x\in S_{H_1}$, such that $\|T\|=\|Tx\|$. We say $T$ is an \textit{absolutely norm attaining}, if for every non-zero closed subspace $M\subseteq H_1$, $T|_M:M\rightarrow H_2$ is a norm attaining operator. We denote the class of all absolutely norm attaining operators by $\mathcal{AN}(H_1,H_2)$. 
     For more details about $\mathcal{AN}$-operators, we refer to \cite{CAR1,PAL,RAMpara,VENKU}. From \cite[Theorem 5.1]{PAL}, we have the following characterization for positive $\mathcal{AN}$-operators.

  \begin{theorem}\cite[Theorem 5.1]{PAL}\label{ANrepresentation}
  	Let $T\in\mathcal{B}(H)$ be a positive operator. Then
$T\in \mathcal{AN}(H)$  if and only if  $T = \alpha I +K+F$, where $\alpha \geq 0$, $K$ is a positive compact operator and $F$ is self-adjoint finite-rank operator.
   \end{theorem}

  
 In the following result, we try to connect the $\mathcal{AN}$-property of an operator with its adjoint and this improves Theorem 2.7 of \cite{RAMpara}.
   
  \begin{proposition}\label{prop essan}
  	Let $T\in\mathcal{B}(H_1,H_2)$. Then any two conditions of the following imply the third condition.
  	\begin{enumerate}
  		\item\label{T} $T\in\mathcal{AN}(H_1,H_2)$.
  		\item\label{Tadjo}  $T^*\in\mathcal{AN}(H_2,H_1).$
  		\item\label{essspe} $\sigma_{\text{ess}}(T^*T)=\sigma_{\text{ess}}(TT^*)$.
  	\end{enumerate}
  \end{proposition}
  \begin{proof}
  	By \cite[Theorem 2.7]{RAMpara}, it is enough to show that conditions \textit{(\ref{T})} and \textit{(\ref{Tadjo})} implies condition \textit{(\ref{essspe})}. From \cite[Theorem 6, Page 173]{MULL}, we have $\sigma_{\text{ess}}(T^*T)\setminus\{0\}=\sigma_{\text{ess}}(TT^*)\setminus\{0\}$. Thus, it is enough to discuss the case of $0$.
  	
  	Let $0\in\sigma_{\text{ess}}(T^*T)$. Since $T\in\mathcal{AN}(H_1,H_2)$, we have $T^*T\in \mathcal{AN}(H_1)$ by \cite[Corollary 2.11]{VENKU}. By Theorem \ref{ANrepresentation}, $T^*T=K-F+\alpha I$ for some positive operators $K\in \mathcal K(H),\; F\in \mathcal F(H)$ and $\alpha \geq 0$, and consequently $\sigma_{\text{ess}}(T^*T)={\{\alpha}\}$. As $0\in \sigma_{\text{ess}}(T^*T)$, we have $\alpha=0$. That is, $T^*T$ is compact and consequently $T$ and $T^*$ are compact operators. Hence  $\sigma_{\text{ess}}(TT^*)={\{0}\}$. In a similar manner, the other way implication can be proved. 
  \end{proof}
   
   An operator $T\in\mathcal{B}(H)$ is called an essentially normal operator if $T^*T-TT^*$ is a compact operator. In other words, $\pi(T)^*\pi(T)=\pi(T)\pi(T)^*$ and the \textit{essential norm} of $T$ is defined by
   \begin{equation*}
   \|T\|_{e}:=\text{dist}(T,\mathcal{K}(H))=\inf{\{\|T-K\|:K\in \mathcal{K}(H)}\}.
   \end{equation*}

   The following result proves that a subclass of absolutely norm attaining operators lies in the class of essentially normal operators.
  \begin{proposition}\label{prop essnormal}
  	Let $T,T^*\in\mathcal{AN}(H)$. Then $T$ is essentially normal and $\sigma_{\text{ess}}(T)\subseteq C(0,\alpha)$, for some $\alpha\in\mathbb{R}^+\cup\{0\}$.
  \end{proposition}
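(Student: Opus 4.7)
The plan is to reduce the problem to the positive $\mathcal{AN}$-operators $T^*T$ and $TT^*$, apply the structure theorem (Theorem~\ref{ANrepresentation}) to each, and exploit Proposition~\ref{prop essan} to pin down their scalar parts.

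First, since $T\in\mathcal{AN}(H)$, \cite[Corollary 2.11]{VENKU} gives $T^*T\in\mathcal{AN}(H)_+$, and likewise $TT^*\in\mathcal{AN}(H)_+$ from $T^*\in\mathcal{AN}(H)$. Theorem~\ref{ANrepresentation} then yields representations
\[
T^*T = \alpha_1 I + K_1 - F_1, \qquad TT^* = \alpha_2 I + K_2 - F_2,
\]
with $K_i\in\mathcal{K}(H)_+$, $F_i\in\mathcal{F}(H)_+$, and $\alpha_i\geq 0$. Because $K_i - F_i$ is compact, Weyl's theorem gives $\sigma_{\text{ess}}(T^*T) = \{\alpha_1\}$ and $\sigma_{\text{ess}}(TT^*) = \{\alpha_2\}$. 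Proposition~\ref{prop essan} applied to the hypotheses $T,T^*\in\mathcal{AN}(H)$ forces $\sigma_{\text{ess}}(T^*T)=\sigma_{\text{ess}}(TT^*)$, so $\alpha_1=\alpha_2=:\beta$.

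Essential normality then drops out of the one-line subtraction
\[
T^*T - TT^* = (K_1 - F_1) - (K_2 - F_2) \in \mathcal{K}(H).
\]
For the spectral inclusion I pass to the Calkin algebra: $\pi(T)$ is normal, and applying $\pi$ to both representations gives $\pi(T)^*\pi(T) = \pi(T)\pi(T)^* = \beta\,\pi(I)$. If $\beta=0$, the C$^*$-identity forces $\pi(T)=0$, so $T$ is compact and $\sigma_{\text{ess}}(T)=\{0\}=C(0,0)$. If $\beta>0$, then $u:=\beta^{-1/2}\pi(T)$ satisfies $u^*u = uu^* = \pi(I)$, hence is unitary in the Calkin algebra, so $\sigma(u)\subseteq C(0,1)$ and consequently $\sigma_{\text{ess}}(T)=\sigma(\pi(T))\subseteq C(0,\sqrt{\beta})$. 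Setting $\alpha:=\sqrt{\beta}$ delivers the claimed inclusion.

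The only genuinely non-trivial step is the matching of the two scalar parts $\alpha_1 = \alpha_2$, which is precisely what Proposition~\ref{prop essan} supplies; once the scalars agree, essential normality is immediate from the representations, and the confinement of $\sigma_{\text{ess}}(T)$ to a circle is a standard consequence of the algebraic identity $a^*a = aa^* = \beta\cdot 1$ in a unital C$^*$-algebra.
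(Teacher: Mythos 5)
Your proposal is correct and follows essentially the same route as the paper: the representation $\alpha I + K - F$ for the positive $\mathcal{AN}$-operators $T^*T$ and $TT^*$, Proposition~\ref{prop essan} to match the two scalar parts, compactness of the self-commutator for essential normality, and the unitarity of $\pi(T)/\alpha$ in the Calkin algebra for the circle inclusion. The only cosmetic difference is that you write $\beta=\alpha^2$ and extract the singleton essential spectra directly from the representations via Weyl's theorem, whereas the paper cites a result of \cite{RAMpara} for that step; the content is the same.
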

  \begin{proof}
  	Suppose $T,T^*\in\mathcal{AN}(H)$. Then Proposition \ref{prop essan} and \cite[Theorem 2.4]{RAMpara} implies that $\sigma_{\text{ess}}(T^*T)=\sigma_{\text{ess}}(TT^*)=\{\alpha^2\}$, for some $\alpha\in\mathbb{R}^+\cup\{0\}$, which is equivalent to say that $\sigma(\pi(T)^*\pi(T))=\{\alpha^2\}=\sigma(\pi(T)\pi(T)^*)$. By Theorem \ref{ANrepresentation}, we have $T^*T=\alpha^2 I+K_1-F_1$ and $TT^*=\alpha^2 I+K_2-F_2$, for some positive operators $K_1,K_2\in\mathcal{K}(H)$ and $F_1,F_2\in\mathcal{F}(H)$. Thus $T^*T-TT^*\in\mathcal{K}(H)$. Consequently $T$ is essentially normal and $\pi(T)^*\pi(T)=\alpha^2 I=\pi(T)\pi(T)^*$.
  	
  	If $\alpha=0$, then $\pi(T)=0$. Thus $T,T^*\in\mathcal{K}(H)$, and $\sigma_{\text{ess}}(T)=\{0\}$.	On the other hand, if $\alpha>0$, then $$\pi\left(T/\alpha\right)^*\pi\left(T/\alpha\right)=I=\pi\left(T/\alpha\right)\pi\left(T/\alpha\right)^*.$$ Thus $\pi\left(T/\alpha\right)$ is unitary, and $\sigma\left(\pi\left(T/\alpha\right)\right)\subseteq C(0,1)$. This is equivalent to say $\sigma_{\text{ess}}(T)\subseteq C(0,\alpha)$.
  \end{proof}
  The converse of the above Theorem is not true. That is, if $T$ is essentially normal and $\sigma_{\text{ess}}(T)\subseteq C(0,\alpha)$ for some $\alpha\in\mathbb{R}^+\cup\{0\}$, then $T$ need not be an $\mathcal{AN}$-operator, see the following example.
  \begin{example}
  	Let $U$ be a unitary operator on $\ell^2(\mathbb{N})$ and $S:\ell^2(\mathbb{N})\rightarrow \ell^2(\mathbb{N})$ be the linear operator defined by
  	$$Se_n=\left(1-\frac{1}{n}\right)e_n,~\forall n\in\mathbb{N},$$
  	where $\{e_n:n\in\mathbb{N}\}$ is the standard orthonormal basis for $\ell^2(\mathbb{N})$. Clearly $S$ is not norm attaining and hence $S\notin\mathcal{AN}(\ell^2(\mathbb{N}))$.  Consider the operator $T:\ell^2(\mathbb{N})\oplus \ell^2(\mathbb{N})\rightarrow \ell^2(\mathbb{N})\oplus \ell^2(\mathbb{N})$, defined by
  	\[
  	T=
  	\begin{bmatrix}
  		U&0\\
  		0&S
  	\end{bmatrix}.
  	\]
  	It is easy to see that $T$ is normal and $\sigma_{\text{ess}}(T)\subseteq C(0,1)$, but neither $T$ nor $T^*$ belongs to $\mathcal{AN}(\ell^2(\mathbb{N})\oplus \ell^2(\mathbb N))$.
  \end{example}
  \begin{theorem}\label{normaloid AN}
      Let $T\in\mathcal{AN}(H)$. If one of the following holds
      \begin{enumerate}
      	\item  $\{\|T\|_{e}-\lambda:\lambda\in\sigma_d(T^*T)\}\in\ell^p(\mathbb{N})\text{ for some }1\leq p<\infty,$
      	\item  $T$ is a normaloid operator,
      \end{enumerate}
     
      then $T$ has a non-trivial hyperinvariant subspace.
  \end{theorem}
  \begin{proof}
      Let $T\in\mathcal{AN}(H).$ By \cite[Corollary 2.11]{VENKU}, we know that $T^*T\in\mathcal{AN}(H)$, and \cite[Theorem 2.4]{RAMpara} implies that $\sigma_{\text{ess}}(T^*T)$ is singleton, say $\{\alpha\}$. Note that $\alpha=\|T\|_{e}$. Now, the result follows from Theorem \ref{Main thm2} and Theorem \ref{Main thm}.
  \end{proof}
  It is easy to see that the operator in Example \ref{example AN} is an absolutely norm attaining operator. Thus Example \ref{example AN} illustrates that the normaloid assumption in the above theorem is sufficient but not necessary.

  Next, we discuss a particular example of  normaloid $\mathcal{AN}$-operators.
  
  We denote by $L^{\infty}(\mathbb{T})$ the Banach space of all essentially bounded measurable functions on the unit circle $\mathbb{T} =\{z \in \mathbb{C}: |z|=1\}$ and by $L^2(\mathbb{T})$ the Hilbert space of all Lebesgue square integrable functions on $\mathbb{T}$. The Hilbert space containing $L^2(\mathbb{T})$-functions whose negative Fourier coefficients are zero is the Hardy space denoted by $H^2.$
  
  
  The Laurent operator $L_\varphi$ with the symbol $\varphi\in L^{\infty}(\mathbb{T})$ is the bounded operator on $L^2(\mathbb{T})$ defined by
  \[(L_\varphi f)(z) = \varphi(z)f(z),\ z \in \mathbb{T}.\]
  Further,  $L_\varphi$ is bounded and $\|L_\varphi \| = \|\varphi\|_\infty$. The Laurent operator induces an  operator called the Toeplitz operator $T_\varphi :H^2 \to H^2$ given by
  \[T_\varphi f = PL_\varphi f, \ \forall f \in H^2,\]
  
  where $P$ is an orthogonal projection of $L^2$ onto $H^2$.
 \begin{corollary}
 	Let  $\varphi \in L^\infty(\mathbb{T})$. If $T_\varphi \in \mathcal{AN}(H^2)$, then $T_\varphi$ has a hyperinvariant subspace.
 \end{corollary}

 \begin{proof}
 	By \cite[Corollary 1, Page 138]{Halmosprobbook}, we know that $r(T_{\phi})=\|T_{\phi}\|=\|\phi\|_{\infty}$, that is, $T_{\phi}$ is normaloid. Hence by Theorem \ref{normaloid AN}, $T_\varphi \in \mathcal{AN}(H^2)$ has a non-trivial hyperinvariant subspace.
 \end{proof}
 The above Corollary is proved in \cite[Corollary 2.2]{GRSSS1}. We also refer to \cite[Theorem 2.4]{GRSSS2} for the information about when is $T_{\varphi}$ an  $\mathcal {AN}$-operator?

Next, we discuss the existence of hyperinvariant subspaces for absolutely minimum attaining operators, a class that lie in the class of operators discussed in the last section. Let us  recall a few necessary details of this class of operators.

 For $T\in \mathcal{B}(H)$, the \textit{minimum modulus} of $T$ is defined by
 $$m(T)=\inf\{\|Tx\|:x\in S_H\}.$$
 We say that $T$ is a \textit{minimum attaining} operator, if there exist $x\in S_H$ such that $m(T)=\|Tx\|$ and  \textit{absolutely minimum attaining} (shortly $\mathcal{AM}$-operator), if for every non-zero closed subspace $M\subseteq H$, $T|_M:M\rightarrow H$ is minimum attaining. We denote the class of all absolutely minimum attaining operators by $\mathcal{AM}(H)$. 
 For more details about $\mathcal{AM}$-operators, we refer to \cite{RAM,CAR,GAN,GAN1}.
 
 By imitating the same procedure as in Propositions \ref{prop essan}, \ref{prop essnormal} and using results \cite[Theorem 5.8]{GAN1} and \cite[Theorems 3.15, 3.16]{RAM}, we can prove the following results for $\mathcal{AM}$-operators as well.
 \begin{proposition}\label{prop essanAM}
 	Let $T\in\mathcal{B}(H_1,H_2)$. Then any two conditions of the following imply the third condition.
 	\begin{enumerate}
 		\item\label{TAM} $T\in\mathcal{AM}(H_1,H_2)$.
 		\item\label{TadjoAM}  $T^*\in\mathcal{AM}(H_2,H_1).$
 		\item\label{essspeAM} $\sigma_{\text{ess}}(T^*T)=\sigma_{\text{ess}}(TT^*)$.
 	\end{enumerate}
 \end{proposition}
 \begin{proposition}\label{prop essnormalAM}
 	Let $T,T^*\in\mathcal{AM}(H)$. Then $T$ is essentially normal and $\sigma_{\text{ess}}(T)\subseteq C(0,\beta)$, for some $\beta\in\mathbb{R}^+\cup\{0\}$.
 \end{proposition}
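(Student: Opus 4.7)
The plan is to mirror the argument used for Proposition \ref{prop essnormal} almost verbatim, substituting the structure theorem for positive $\mathcal{AN}$-operators with its positive $\mathcal{AM}$ counterpart. Given $T,T^*\in\mathcal{AM}(H)$, I would first invoke Proposition \ref{prop essanAM} to extract the equality
\[
\sigma_{\text{ess}}(T^*T)=\sigma_{\text{ess}}(TT^*).
\]
Next, since $T\in\mathcal{AM}(H)$ implies $T^*T\in\mathcal{AM}(H)_+$ (an $\mathcal{AM}$ analogue of \cite[Corollary 2.11]{VENKU}, available via \cite[Theorems 3.15, 3.16]{RAM}), I would apply the structure theorem for positive $\mathcal{AM}$-operators from \cite[Theorem 5.8]{GAN1}, which writes $T^*T=\beta^2 I+F_1-K_1$ and, symmetrically, $TT^*=\beta^2 I+F_2-K_2$ with $F_i$ positive finite-rank and $K_i$ positive compact. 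By Weyl's theorem applied to these compact perturbations of a scalar, $\sigma_{\text{ess}}(T^*T)=\{\beta^2\}=\sigma_{\text{ess}}(TT^*)$ for the same $\beta\geq 0$.

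From the displayed formulas for $T^*T$ and $TT^*$ I would immediately read off that
\[
T^*T-TT^*=(F_1-F_2)-(K_1-K_2)\in\mathcal{K}(H),
\]
which gives essential normality of $T$. Passing to the Calkin algebra, I would conclude
\[
\pi(T)^*\pi(T)=\beta^2\pi(I)=\pi(T)\pi(T)^*.
\]

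Finally, I would split on the value of $\beta$. If $\beta=0$, then $\pi(T)^*\pi(T)=0$ forces $\pi(T)=0$, so both $T$ and $T^*$ are compact and $\sigma_{\text{ess}}(T)=\{0\}\subseteq C(0,0)$. If $\beta>0$, then $\pi(T/\beta)$ is a unitary element of the Calkin algebra, so its spectrum lies in $C(0,1)$, and consequently $\sigma_{\text{ess}}(T)\subseteq C(0,\beta)$. I do not anticipate any genuine obstacle: the only point that requires care is verifying that the cited $\mathcal{AM}$ versions of the results used for the $\mathcal{AN}$ case (closure of the class under the map $T\mapsto T^*T$, together with the canonical decomposition of positive $\mathcal{AM}$-operators as a finite-rank-plus-compact perturbation of a scalar) really give the form $\beta^2 I+F-K$ needed to make the compactness of the commutator transparent; once this is in hand, the rest is an exact transcription of the $\mathcal{AN}$ proof.
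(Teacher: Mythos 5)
Your proposal is correct and is essentially the paper's own argument: the paper omits a written proof, stating only that one imitates Propositions \ref{prop essan} and \ref{prop essnormal} using the positive $\mathcal{AM}$ structure results of \cite[Theorem 5.8]{GAN1} and \cite[Theorems 3.15, 3.16]{RAM}, which is exactly the transcription you carry out (decomposition $\beta^2 I \pm(\text{finite rank}) \mp (\text{compact})$, compactness of the self-commutator, then the Calkin-algebra unitary/zero dichotomy). No gaps to report.
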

 
 Similar to normaloid operators, we propose a new definition that depends on the minimum modulus of an operator.
 \begin{definition}
 	Let $T\in\mathcal{B}(H)$. Then we say $T$ to be minimaloid if $m(T)=d(0,\sigma(T))=\inf\{|\lambda|:\lambda\in\sigma(T)\}$.
 \end{definition}
 Note that every normal operator is minimaloid. The next result relates the class of normaloid operators and minimaloid operators.
 \begin{lemma}\label{minimaloidnormaloidreln}
 	Suppose $T\in\mathcal{B}(H)$ has a bounded inverse. Then $T$ is minimaloid if and only if $T^{-1}$ is normaloid.
 \end{lemma}
 \begin{proof}
 	The proof follows from the fact that $\sigma(T^{-1})=\{\lambda^{-1}:\lambda\in\sigma(T)\}$ and $\|T^{-1}\|=1/m(T)$.
 	$\qedhere$
 \end{proof}
 \begin{theorem}\label{minmaloidAMHIS}
 	Let $T\in\mathcal{AM}(H)$ be minimaloid. Then $T$ has a hyperinvariant subspace.
 \end{theorem}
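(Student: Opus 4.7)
The plan is to mirror the structure of the proof of Theorem \ref{ANhyperinvar} and, in the non-trivial case, reduce to that theorem by passing to $T^{-1}$. First I would split into the same two cases. If $N(T)\ne\{0\}$ or $N(T^*)\ne\{0\}$, then $N(T)$ or $\overline{R(T)}$ is a proper closed subspace stable under every operator commuting with $T$, and we are done.

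Assume therefore that $N(T)=N(T^*)=\{0\}$. Because every operator in $\mathcal{AM}(H)$ has closed range, these two triviality conditions together force $R(T)=H$, so $T$ is bijective with $T^{-1}\in\mathcal{B}(H)$ and $m(T)>0$. Proposition \ref{minimaloidnormaloidreln} then tells us that $T^{-1}$ is normaloid. As in Theorem \ref{ANhyperinvar}, we tacitly exclude $T=cI$, in which case no non-trivial hyperinvariant subspace exists; hence $T^{-1}$ is not a scalar multiple of $I$ either.

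The critical intermediate step is to verify that $T^{-1}\in\mathcal{AN}(H)$. For any closed subspace $M\subseteq H$, set $N=T^{-1}(M)$, which is again closed since $T$ is a linear homeomorphism. Writing a typical unit vector of $M$ as $y=Tx$ with $x\in N$, a short calculation gives
\begin{equation*}
\|T^{-1}|_M\|=\sup_{y\in S_M}\|T^{-1}y\|=\sup\bigl\{\|x\|:x\in N,\ \|Tx\|=1\bigr\}=\frac{1}{m(T|_N)},
\end{equation*}
and the supremum on the left is attained if and only if the infimum defining $m(T|_N)$ is. As $M$ ranges over the closed subspaces of $H$, so does $N$; since $T\in\mathcal{AM}(H)$ this infimum is always attained, so $T^{-1}|_M$ is norm attaining for every $M$.

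To finish I would apply Theorem \ref{ANhyperinvar} to $T^{-1}$, which is a normaloid $\mathcal{AN}$-operator that is not a scalar multiple of $I$, to obtain a non-trivial hyperinvariant subspace $M_0$. Since any bounded operator on $H$ commutes with $T$ if and only if it commutes with $T^{-1}$, the subspace $M_0$ is hyperinvariant for $T$ as well. I expect the main point of care to be the $\mathcal{AM}$-$\mathcal{AN}$ duality $T^{-1}\in\mathcal{AN}(H)$: although the underlying identity is short, one has to use essentially that the map $M\mapsto T^{-1}(M)$ is a bijection of the lattice of closed subspaces, which is ensured by the $\mathcal{AM}$ closed-range property together with the triviality of the kernels. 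An alternative to this reduction would be to imitate the two-subcase argument of Theorem \ref{ANhyperinvar} directly, splitting according to whether $m(T)<\beta$ (finding a spectral point of modulus $m(T)$ outside $\sigma_{\text{ess}}(T)$, which is Fredholm of index zero by essential normality from Proposition \ref{prop essnormalAM}) or $m(T)=\beta$ (which would require the $\mathcal{AM}$-analogue of the polar-decomposition argument used in the $\mathcal{AN}$ case); the $T^{-1}$ route avoids invoking any such structure theorem.
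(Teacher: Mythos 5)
Your proposal is correct and takes essentially the same route as the paper: you split into the non-injective/non-dense-range case (handled by $N(T)$ or $\overline{R(T)}$, using the closed range of $\mathcal{AM}$-operators) and the bijective case, which you reduce to Theorem \ref{ANhyperinvar} via $T^{-1}$, normaloid by Proposition \ref{minimaloidnormaloidreln}, with the same commutant observation. The only difference is that you prove $T^{-1}\in\mathcal{AN}(H)$ directly through the duality $\|T^{-1}|_M\|=1/m(T|_{T^{-1}(M)})$, where the paper simply cites \cite[Corollary 4.7]{KUL}; your argument for that step is sound.
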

 \begin{proof} Since $T\in\mathcal{AM}(H)$, we have  $R(T)$ is a closed subspace of $H$ by \cite[Proposition 3.3]{GAN}. We assume that $0\notin\sigma(T)$, otherwise either $N(T)\ne\{0\}$ or $R(T)\ne H$ is a non-trivial hyperinvariant subspace for $T$.
 By  \cite[Corollary 4.7]{KUL} and Proposition \ref{minimaloidnormaloidreln}, $T^{-1}\in\mathcal{AN}(H)$ is a normaloid operator.
 	Now the result follows from Theorem \ref{normaloid AN},
 	because if $S\in \mathcal B(H)$ is such that $ST=TS$, then $ST^{-1}=T^{-1}S$.
 \end{proof}


   We end this section with a brief discussion about non-trivial hyperinvariant subspace for $\overline{\mathcal{AN}(H)}$ and $\overline{\mathcal{AM}(H)}$, where the closure is taken with respect to the operator norm of $\mathcal{B}(H)$. For a detailed discussion about  $\overline{\mathcal{AN}(H)}$, we refer to \cite{GRSSS3}. Surprisingly, it turns out that $\overline{\mathcal{AN}(H)}=\overline{\mathcal{AM}(H)}$ (see \cite[Theorem 6.10]{GRSSS3} for more details).

  We recall that a partial isometry $V\in \mathcal{AN}(H)$ if and only if either $N(V)$ or $R(V)$ is finite dimensional (\cite[Proposition 3.14]{CAR1}). Furthermore, $V\in \mathcal{AN}(H)$ if and only if $V\in \overline{\mathcal {AN}(H)}$ (\cite[Corollary 3.10]{GRSSS3}) if and only if $V\in \overline{\mathcal{AM}(H)}$ (\cite[Theorem 6.10]{GRSSS3}) if and only $V\in \mathcal{AM}(H)$ (\cite[Proposition 6.9]{GRSSS3}). From these observations we have the following.
  \begin{remark}\label{partialisoANclosure}
      Let $V\in \mathcal B(H)\setminus \mathcal K(H)$ be a partial isometry. Then the following are equivalent.
      \begin{enumerate}
      \item $N(V)$ is finite dimensional
      \item $V\in \mathcal{AN}(H)$
      \item $V\in \overline{\mathcal{AN}(H)}=\overline{\mathcal{AM}(H)}$
      \item $V\in \mathcal{AM}(H)$.      
      \end{enumerate}      
  \end{remark}
 \begin{theorem}
 Let $T\in \mathcal B(H)\setminus \mathcal K(H)$. Then the following are equivalent;
 \begin{enumerate}
 \item\label{singleesspt} $\sigma_{\text{ess}}(T^*T)={\{\alpha}\}$ for some  $\alpha>0$
 \item \label{closureANchar}$T^*T \in \overline{\mathcal{AN}(H)}$
 \item\label{cptpertANpartialiso} there exists a partial isometry $V$ with finite dimensional $N(V)$ and a compact operator $K\in \mathcal K(H)$ such that $T=K+\sqrt{\alpha}V$.
 \end{enumerate}
 \end{theorem}
 \begin{proof}
 The implication (\ref{singleesspt})$\Rightarrow $(\ref{closureANchar}) follows from \cite[Theorem 4.6]{GRSSS3}.  To prove (\ref{closureANchar})$\Rightarrow $(\ref{cptpertANpartialiso}), let us assume that $T^*T \in \overline{\mathcal{AN}(H)}$. Then by  \cite[Lemma 3.14]{GRSSS3}, $|T|\in \overline{\mathcal{AN}(H)}$, and by \cite[Theorem 4.2]{GRSSS3}, there exists positive compact operators $K_1, K_2$ and $\beta \geq 0$ with $K_1K_2=0$ and $K_1\leq \beta I$ such that $|T|=\beta I-K_1+K_2$. If $T=V|T|$ is the polar decomposition of $T$, then we can obtain $T=\sqrt{\alpha}V+K$, where $\beta=\sqrt{\alpha}$ and $K=V(K_2-K_1)$. By \cite[Corollary 3.8]{GRSSS3}, we can conclude that $V\in \overline{\mathcal{AN}(H)}$ and by \cite[Corollary 3.10]{GRSSS3}, we get that $V\in \mathcal{AN}(H)$. By  Remark \ref{partialisoANclosure}, $N(V)$ is finite dimensional.
 
 The implication (\ref{cptpertANpartialiso})$\Rightarrow $(\ref{singleesspt}) can be obtained by computing $T^*T$ and applying the Weyl's theorem on the essential spectrum.
 \end{proof}
 \begin{corollary}
     Let $T=\beta V+K$, where  $\beta \geq 0$, $V\in \mathcal B(H)$ be a partial isometry with finite dimensional null space and $K\in \mathcal K_{p}(H)$ for some $1\leq p<\infty$. Then $T$ has a non-trivial hyperinvariant subspace.
 \end{corollary}
 \begin{proof}
 We have 
 \begin{align*}
 T^*T&=\beta^2V^*V+\beta(V^*K+K^*V)+K^*K\\
     &=\beta^2(I-P_{N(V)})+ \beta(V^*K+K^*V)+K^*K\\
     &=\beta^2I+\Hat{K},
     \end{align*}
     where $\Hat{K}=\beta(V^*K+K^*V)+K^*K-\beta^2P_{N(V)}$. As $N(V)$ is finite dimensional, we get that $P_{N(V)}\in \mathcal F(H)$ and consequently $\Hat{K}\in \mathcal K_{p}(H)$. It is clear that $\sigma_{\text{ess}}(T^*T)={\{\beta^2}\}$ and $T^*T$ satisfies the conditions of Theorem \ref{Main thm2}. Hence $T$ has a non-trivial hyperinvariant subspace.
 \end{proof}

\begin{center}
		Acknowledgements
\end{center}
We are grateful to the referee for the valuable suggestions, which have enhanced the clarity and presentation of this article.

\end{document}